\newtheorem{theorem}{Theorem}[section]
\newtheorem{corollary}[theorem]{Corollary}
\newtheorem{lemma}[theorem]{Lemma}
\numberwithin{equation}{section}
\title[Inverse source problem]{Inverse source scattering problem for a nonlinear Schr\"odinger equation}
 \author[L. Zhang]{Lei Zhang}
 \address{Department of Mathematics, Zhejiang University of Technology, Hangzhou 310014, China.}
\email{zhanglei@zjut.edu.cn}
 \author[Y. Zhao]{Yue Zhao}
\address{School of Mathematics and Statistics, Central China Normal University,
Wuhan 430079, China.}
\email{zhaoyueccnu@163.com}
\subjclass[2010]{35R30, 78A46.}
\keywords{inverse source problem, single frequency, nonlinear Schr\"odinger equation, uniqueness}
\begin{document}

\begin{abstract}

We study an inverse source scattering problem for the Schr\"odinger equation with a quadratic nonlinearity. 
In general, uniqueness of inverse source problems can not be guaranteed at a fixed energy. 
Therefore, additional information is required for the source in order to
obtain a unique solution.
By adding reference point sources, we show that 
a general source function could be uniquely determined from boundary measurements at a fixed wavenumber. This method does not apply to inverse source problems of linear equations since it uses the nonlinearity as a tool. The proof utilizes the method of linearization to reduce the nonlinear inverse source scattering problem to the inverse potential scattering problem of the linear Schr\"odinger equation. 

\end{abstract}

\maketitle

\section{Introduction}

This paper is concerned with an inverse source scattering problem for the Schr\"odinger equation with a quadratic nonlinearity.  
For linear equations, in general the uniqueness of the inverse source problems can not be guaranteed at a fixed frequency due to the existence of non-radiating
sources \cite{Monk, Bao, BC}. We study the question of whether the unknown source can be uniquely determined by boundary measurements at a fixed frequency for the nonlinear Schr\"odinger equation.

We consider the following nonlinear Schr\"odinger equation with a quadratic nonlinearity in two dimensions
\begin{equation}\label{eqn}
-\Delta u - \kappa^2 u + \alpha u^2 = f,
\end{equation}
where $f(x)\in L_{\rm comp}^2(\mathbb R^2)$ is the external source, $\alpha(x)\in L_{\rm comp}^\infty(\mathbb R^2)$ is the potential, $u(x)$ is the radiating field and $\kappa>0$ is the wavenumber.  Let $B_R := \{x\in\mathbb R^2 ~:~ |x|< R\}$ be the disk of radius $R>0$ with boundary $\partial B_R$. 
We assume that both $f$ and $\alpha$ are real-valued and have compact supports contained in $B_R$. We also assume that $\text{supp}f\subset\text{supp}\alpha$.

Let $\epsilon_{0}>0$ be a positive constant.
We are interested in the following inverse problem:

\noindent \textbf{IP}. Determine the source function $f$ from the following boundary measurements at a fixed frequency
\begin{equation}\label{data}
\{\tilde{u}_\epsilon(x, x_0)\vert_{\partial B_R}: \text{for all} \, \epsilon\in [0, \epsilon_{0}), \, x_0\in\partial B_R\}.
\end{equation}

\noindent
Here $\tilde{u}_\epsilon(x, x_0)$ denotes the radiating solution corresponding to the perturbed source function 
\[
f_{\epsilon, x_0} = f + \epsilon\delta(x - x_0),
\]
which satisfies
\[
-\Delta \tilde{u}_\epsilon - \kappa^2 \tilde{u}_\epsilon + \alpha \tilde{u}_\epsilon^2 = f_{\epsilon, x_0}.
\]
In fact, if one only takes boundary measurements radiated by the source function $f$, the uniqueness can not be guaranteed. 
For example, choose $\varphi\in C_0^\infty(B_R)$ such that $f_0:= -\Delta \varphi - \kappa^2 \varphi + \alpha\, \varphi^2 \neq 0$. 
It is easy to see that the $\varphi$ vanishes near $\partial B_R$, which implies that $f_0$ can not be recovered in this case. 
Thus, additional information are needed to guarantee the uniqueness. 
In this paper we collect the additional boundary measurements by placing reference point sources on $\partial B_R$ with amplitudes varying in a small interval.  
In this way we are able to utilize the method of linearization to reduce the inverse source problem
to an inverse potential problem of the linear Schr\"odinger equation. The uniqueness of the inverse source problem follows from the existing results for the inverse potential problems of linear Schr\"odinger equations. In this paper, we focus on the two-dimensional case because the direct problem is studied in two dimensions. We use the decay of the fundamental solution with respect to the wavenumber in two dimensions to construct a sequence which converges to the unique solution. This decay property does not hold in three dimensions. However, the method for the study of the inverse problem may be extended to higher dimensions once we have the well-posedness of the direct problem.

We briefly review the existing literature on inverse source scattering problems. For linear Helmholtz equations, 
it has been realized that the uniqueness for the inverse source problem can be regained by using multi-frequency boundary measurements \cite{BLLT, Bao}.
There are also various studies on the increasing stability on this subject. See e.g. \cite{blz, CIL, LZZ, IL, LSX} and references therein.
For the inverse source problem of certain semilinear elliptic equations by Dirichlet-to-Neumann map in a bounded domain, we mention the reference \cite{Myers}.

The paper is organized as follows. The direct scattering problem is analyzed in Section \ref{dp}. Section \ref{ip} is devoted to the inverse source problem.

\section{Direct scattering problem}\label{dp}

To study the inverse problem we shall first study the direct problem. In this section we consider a more general equation and study its well-posedness. 
The proof adapts the arguments in \cite{Servo}. Compared with \cite{Servo}, we further derive an explicit resolvent estimate with respect to the wavenumber $\kappa$, which is of independent interest. Hereafter, the notation $C$ stands for a generic positive constant depending on $\|f\|_{L^2(B_R)}$ which may change step
by step.

Consider the following nonlinear Schr\"odinger equation
\begin{equation}\label{main_eq}
-\Delta u - \kappa^2 u + Vu + \alpha u^2 = f.
\end{equation}
Here $V(x)$ and $\alpha(x)$ are bounded real-valued potential functions compactly supported in $B_R$.
The outgoing solution to \eqref{main_eq} should satisfy the following Lippmann--Schwinger integral equation
\begin{equation}\label{ie}
u =  \int_{\mathbb R^2} H^{(1)}_0(\kappa|x-y|) f {\rm d}y - \int_{\mathbb R^2} H^{(1)}_0(\kappa|x-y|) \Big( V u + \alpha u^2 \Big) {\rm d}y.
\end{equation}
Introduce the sequence
\begin{align}\label{sequence}
u_{j + 1} = u_0 - \int_{\mathbb R^2} H^{(1)}_0(\kappa|x-y|) \Big(V u_{j} + \alpha u^2_{j} \Big) {\rm d}y, \quad j\geq 0.
\end{align}
Here $u_0 = \int_{\mathbb R^2} H^{(1)}_0(\kappa|x-y|) f {\rm d}y\in H^2_{loc}(\mathbb R^2)$. We will prove the well-posedness of the direct scattering problem
by showing that the sequence $u_j$ converges to a unique bounded solution of \eqref{main_eq}.

As $H^{(1)}_0(\kappa|x-y|)$ has the following expression \cite{FY06}
\begin{align*}
H^{(1)}_0(\kappa|x-y|) =  C e^{{\rm i} \kappa |x - y|} \int_0^\infty e^{-t} t^{-\frac{1}{2}} \Big( \frac{t}{2} - {\rm i}\kappa |x - y| \Big)^{-\frac{1}{2}} {\rm d}t,
\end{align*}
where $C$ is a positive constant, a simple calculation yields 
\begin{align}\label{kernel_1}
|H^{(1)}_0(\kappa|x-y|)| \leq  \frac{C}{|\kappa|^{\frac{1}{2}}|x - y|^{\frac{1}{2}}} \int_0^\infty e^{-t} t^{-\frac{1}{2}} {\rm d}t \leq  
\frac{C}{|\kappa|^{\frac{1}{2}}|x - y|^{\frac{1}{2}}}.
\end{align}
The estimate \eqref{kernel_1} for $H^{(1)}_0(\kappa|x-y|)$ will be useful in the subsequent analysis.

\begin{lemma}\label{uj}
There exists $C_0>0$ such that
for $|\kappa|\geq C_0$ and $j\geq 0$ one has 
\begin{align}\label{est}
\|u_j\|_{L^\infty(\mathbb R^2)} \leq C \Big(1 + \frac{1}{|\kappa|^{1/2}}\Big).
\end{align}
\end{lemma}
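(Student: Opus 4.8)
The plan is to argue by induction on $j$, the engine being the kernel bound \eqref{kernel_1} together with the fact that the weak singularity $|x-y|^{-1/2}$ is integrable in two dimensions. The crucial feature I want to exploit is that every term produced by the integral operator in \eqref{sequence} carries a factor $|\kappa|^{-1/2}$, so that for $|\kappa|$ large the nonlinear feedback in the iteration behaves like a contraction-type perturbation and cannot make the bound \eqref{est} blow up as $j$ grows.

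First I would record the geometric estimate
\[
\sup_{x\in\mathbb R^2}\int_{B_R}\frac{{\rm d}y}{|x-y|^{1/2}} \leq C(R) < \infty,
\]
which follows by splitting into the case $|x|\le 2R$, where $B_R$ sits inside a fixed ball about $x$ and polar coordinates give $\int_0^{3R} r^{1/2}\,{\rm d}r<\infty$, and the case $|x|>2R$, where $|x-y|>R$ on $B_R$. Combining this with \eqref{kernel_1} yields the two operator bounds I will use repeatedly: the uniform-in-$x$ bound $\int_{B_R}|H^{(1)}_0(\kappa|x-y|)|\,{\rm d}y \le C|\kappa|^{-1/2}$, and, by Cauchy--Schwarz, $|u_0(x)| \le C|\kappa|^{-1/2}\|f\|_{L^2(B_R)}\bigl(\sup_x\int_{B_R}|x-y|^{-1}{\rm d}y\bigr)^{1/2}$. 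Since $|x-y|^{-1}$ is likewise integrable over $B_R$ uniformly in $x$, this gives the base case $\|u_0\|_{L^\infty} \le C|\kappa|^{-1/2}$, which is consistent with \eqref{est}.

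For the inductive step, assume $\|u_j\|_{L^\infty}\le K$ with $K := C(1+|\kappa|^{-1/2})$. Applying \eqref{sequence}, using that $\|V\|_{L^\infty}$ and $\|\alpha\|_{L^\infty}$ are bounded, the pointwise bound $|Vu_j+\alpha u_j^2| \le C(\|u_j\|_{L^\infty} + \|u_j\|_{L^\infty}^2)$ on $B_R$, and the operator bound above, I would obtain the recursion
\[
\|u_{j+1}\|_{L^\infty} \le \frac{C_1}{|\kappa|^{1/2}} + \frac{C_2}{|\kappa|^{1/2}}\bigl(K + K^2\bigr),
\]
where $C_1$ depends on $\|f\|_{L^2(B_R)}$ and $C_2$ on the potentials and $R$ only. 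For $|\kappa|\ge 1$ one has $K\le 2C$, so $K+K^2$ is bounded by a constant depending only on $C$; hence the right-hand side is at most $|\kappa|^{-1/2}$ times a constant $C_3=C_3(C)$. Choosing $C_0$ so large that $C_0^{1/2}\ge C_3/C$ then forces, for $|\kappa|\ge C_0$, the right-hand side to be $\le C \le K$, and the induction closes with the \emph{same} constant for every $j$.

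The main obstacle is precisely this self-referential closing of the induction: because $C$ appears on both sides through $K$ and the quadratic term $K^2$, one cannot let the generic constant grow from step to step as a naive estimate would. The resolution is to fix $C$ first (at least as large as the base-case constant) and only then pick the threshold $C_0=C_0(C)$ large enough that the $|\kappa|^{-1/2}$ prefactor absorbs the accumulated constant. The two-dimensional decay of the fundamental solution encoded in \eqref{kernel_1} is exactly what makes this absorption possible, which is why the author emphasizes that the argument is special to dimension two.
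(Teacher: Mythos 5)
Your proof is correct and follows essentially the same route as the paper: induction on $j$, with the kernel bound \eqref{kernel_1} and the uniform integrability of $|x-y|^{-1/2}$ over $B_R$ supplying the factor $|\kappa|^{-1/2}$ that closes the inductive step once $|\kappa|$ is large. If anything, you are more careful than the paper's own proof, which invokes a generic constant ``which may change step by step'' inside the induction; fixing $C$ first (no smaller than the base-case constant) and only then choosing the threshold $C_0=C_0(C)$, as you do, is exactly what is needed to make the constant uniform in $j$ and the argument rigorous.
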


\begin{proof}

The proof is carried out by induction. Clearly \eqref{est} holds for $j = 0$. Assume that \eqref{est} holds for $u_j$.
Since 
\begin{align*}
u_{j + 1}~ = \int_{\mathbb R^2} H^{(1)}_0(\kappa|x-y|) f {\rm d}y - \int_{\mathbb R^2} H_0^{(1)}(\kappa|x - y|) \Big(V u_{j} + \alpha u^2_{j} \Big) {\rm d}y
\end{align*}
and $V, \alpha\in L^{\infty}_{\rm comp}(\mathbb R^2)$, one has that
\[
\|u_{j + 1}\|_{L^\infty(\mathbb R^2)} \leq  C + C \int_{B_R} |H^{(1)}_0(\kappa|x-y|)| (|u_j| + |u_j|^2){\rm d}y.
\]
Moreover, by the estimates \eqref{kernel_1} and \eqref{est} for $u_j$ we have that
\[
\|u_{j + 1}\|_{L^\infty(\mathbb R^2)} %\leq C + \frac{C}{|\kappa|^{1/2}} \Big(1 + \frac{1}{|\kappa|^{1/2}}\Big)^3 
\leq C\Big(1 + \frac{1}{|\kappa|^{1/2}}\Big)
\]
where the last inequality holds  when $|\kappa|$ is large. 
This completes the proof.

\end{proof}

\begin{lemma}\label{cauchy}
It holds that 
\[
\|u_{j + 1} - u_j\|_{L^\infty(\mathbb R^2)} \leq \eta (\|u_{j} - u_{j - 1}\|_{L^\infty(\mathbb R^2)}),
\]
where $\eta = \frac{C}{|\kappa|^{1/2}} \Big(1 + \frac{1}{|\kappa|^{1/2}}\Big)$.
\end{lemma}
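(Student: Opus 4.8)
The plan is to estimate the difference $u_{j+1}-u_j$ directly from the recursion \eqref{sequence}. Subtracting the defining equations for consecutive iterates, the source term $u_0$ cancels and we are left with
\[
u_{j+1}-u_j = -\int_{\mathbb R^2} H^{(1)}_0(\kappa|x-y|)\Big( V(u_j-u_{j-1}) + \alpha(u_j^2-u_{j-1}^2)\Big)\,{\rm d}y.
\]
The first step is to factor the quadratic difference as $u_j^2-u_{j-1}^2 = (u_j+u_{j-1})(u_j-u_{j-1})$, so that both terms in the integrand carry the factor $u_j-u_{j-1}$. Since $V$ and $\alpha$ are bounded and compactly supported in $B_R$, I can pull $\|u_j-u_{j-1}\|_{L^\infty(\mathbb R^2)}$ out of the integral after taking absolute values, leaving an integral of $|H^{(1)}_0(\kappa|x-y|)|$ over $B_R$ multiplied by a coefficient involving $|V|$, $|\alpha|$, and $|u_j+u_{j-1}|$.

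The second step is to control the coefficient $|u_j+u_{j-1}|$ using the uniform bound from Lemma \ref{uj}: both $\|u_j\|_{L^\infty}$ and $\|u_{j-1}\|_{L^\infty}$ are bounded by $C(1+|\kappa|^{-1/2})$, so their sum is of the same order. The third step is to bound the kernel integral $\int_{B_R}|H^{(1)}_0(\kappa|x-y|)|\,{\rm d}y$. Here I would invoke the pointwise estimate \eqref{kernel_1}, which gives $|H^{(1)}_0(\kappa|x-y|)|\leq C|\kappa|^{-1/2}|x-y|^{-1/2}$; since the singularity $|x-y|^{-1/2}$ is locally integrable in two dimensions and the integration is over the bounded region $B_R$, this integral is bounded by $C|\kappa|^{-1/2}$ uniformly in $x$. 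Combining these estimates produces exactly the factor $\eta = \frac{C}{|\kappa|^{1/2}}\big(1+\frac{1}{|\kappa|^{1/2}}\big)$, where the $|\kappa|^{-1/2}$ comes from the kernel integral and the factor $(1+|\kappa|^{-1/2})$ comes from the $L^\infty$ bound on $u_j+u_{j-1}$.

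I expect the main technical point to be the uniform bound of the kernel integral $\int_{B_R}|H^{(1)}_0(\kappa|x-y|)|\,{\rm d}y$ over all $x$, together with tracking the dependence on $\kappa$ carefully so that the $|\kappa|^{-1/2}$ decay is preserved rather than absorbed into the generic constant $C$. This decay is precisely what drives the contraction: for $|\kappa|$ large enough the factor $\eta$ is strictly less than one, which is the content needed to conclude convergence of the sequence $\{u_j\}$ in the subsequent well-posedness argument. The estimate itself is essentially a routine application of \eqref{kernel_1} and Lemma \ref{uj}, so the statement as written omits the constant's explicit form and simply records the contraction factor $\eta$.
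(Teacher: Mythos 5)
Your proposal is correct and follows essentially the same route as the paper: subtract consecutive iterates so $u_0$ cancels, factor $u_j^2-u_{j-1}^2=(u_j+u_{j-1})(u_j-u_{j-1})$, bound $|u_j+u_{j-1}|$ by Lemma \ref{uj}, and use the kernel estimate \eqref{kernel_1} to extract the $|\kappa|^{-1/2}$ factor from the integral over $B_R$. The only difference is cosmetic: you spell out the uniform-in-$x$ integrability of $|x-y|^{-1/2}$ over $B_R$, which the paper leaves implicit.
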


\begin{proof}

By a direction calculation one has
\begin{align}\label{est1}
|u_{j + 1} - u_j| \leq C \int_{B_R} |H^{(1)}_0(\kappa|x-y|) | \Big( |u_j - u_{j-1}| + |u^2_{j} - u^2_{j-1}| \Big){\rm d}y.
\end{align}
Moreover, one has from Lemma \ref{uj} that
\begin{align*}
 |u^2_j- u^2_{j - 1} | &\leq |u_j - u_{j-1}| |u_j + u_{j-1}|\\
&\leq C \Big(1 + \frac{1}{|\kappa|^{1/2}}\Big) |u_j - u_{j - 1}|,
\end{align*}
which by \eqref{est1} and \eqref{kernel_1} gives 
\begin{align*}
\|u_{j + 1} - u_j\|_{L^\infty(\mathbb R^2)} &\leq \frac{C}{|\kappa|^{1/2}} \Big(1 + \frac{1}{|\kappa|^{1/2}}\Big) 
\|u_{j}  - u_{j - 1} \|_{L^\infty(\mathbb R^2)} \\
\end{align*}
The proof is completed.
\end{proof}

\begin{theorem}\label{bs}
There exists $C_0>0$ such that the sequence $u_j$ converges to $u$ in $L^\infty(\mathbb R^2)$ for $|\kappa|\geq C_0$, which satisfies 
the Lippmann--Schwinger integral equation \eqref{ie}. Moreover, the solution $u$ has the form $u = \int_{\mathbb R^2} H^{(1)}_0(\kappa|x-y|) f {\rm d}y + h$ where
\[
\|h\|_{L^\infty(\mathbb R^2)} = \mathcal{O} \Big(\frac{1}{|\kappa|^{1/2}}\Big).
\]
\end{theorem}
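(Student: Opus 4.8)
The plan is to run a standard contraction-mapping argument built on the two preceding lemmas, and then to read off the claimed expansion of $u$ directly from the Lippmann--Schwinger equation.

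First I would fix the wavenumber threshold. Since $\eta = \frac{C}{|\kappa|^{1/2}}\big(1 + \frac{1}{|\kappa|^{1/2}}\big) \to 0$ as $|\kappa| \to \infty$, I can enlarge the constant $C_0$ from Lemma \ref{uj} if necessary so that $\eta \leq \tfrac{1}{2} < 1$ for all $|\kappa| \geq C_0$. With this choice, Lemma \ref{cauchy} gives, by induction, $\|u_{j+1} - u_j\|_{L^\infty(\mathbb{R}^2)} \leq \eta^{\,j}\, \|u_1 - u_0\|_{L^\infty(\mathbb{R}^2)}$, so the telescoping series $\sum_{j \geq 0}(u_{j+1} - u_j)$ is dominated by a convergent geometric series. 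Hence $\{u_j\}$ is Cauchy in $L^\infty(\mathbb{R}^2)$; since this is a Banach space, $u_j$ converges to a limit $u \in L^\infty(\mathbb{R}^2)$, which by Lemma \ref{uj} inherits the bound $\|u\|_{L^\infty(\mathbb{R}^2)} \leq C\big(1 + |\kappa|^{-1/2}\big)$. The same contraction property shows that the fixed point is unique among bounded solutions.

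Next I would pass to the limit in the recurrence \eqref{sequence} to verify that $u$ solves \eqref{ie}. The only point requiring care is the exchange of limit and integral. Uniform convergence $u_j \to u$ forces $u_j^2 \to u^2$ uniformly as well, since $\|u_j^2 - u^2\|_{L^\infty(\mathbb{R}^2)} \leq \|u_j - u\|_{L^\infty(\mathbb{R}^2)}\,\|u_j + u\|_{L^\infty(\mathbb{R}^2)}$ and the $u_j$ are uniformly bounded. Because $V, \alpha$ are supported in $B_R$ and the kernel satisfies \eqref{kernel_1} with $|x-y|^{-1/2}$ integrable over the bounded set $B_R$ in two dimensions, the operator $g \mapsto \int_{\mathbb{R}^2} H^{(1)}_0(\kappa|x-y|)\, g\, {\rm d}y$ is bounded from $L^\infty(B_R)$ into $L^\infty(\mathbb{R}^2)$. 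Letting $j \to \infty$ therefore turns the right-hand side of \eqref{sequence} into that of \eqref{ie}, so $u$ satisfies the integral equation.

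Finally, the claimed decomposition is immediate. Writing $h := u - u_0 = -\int_{\mathbb{R}^2} H^{(1)}_0(\kappa|x-y|)\big(V u + \alpha u^2\big)\,{\rm d}y$ and applying \eqref{kernel_1} together with the uniform bound on $u$, I obtain
\[
\|h\|_{L^\infty(\mathbb{R}^2)} \leq \frac{C}{|\kappa|^{1/2}} \int_{B_R} \frac{|V u + \alpha u^2|}{|x-y|^{1/2}}\,{\rm d}y \leq \frac{C}{|\kappa|^{1/2}},
\]
which is exactly the $\mathcal{O}\big(|\kappa|^{-1/2}\big)$ bound. The only genuine obstacle is the bookkeeping of the $\kappa$-dependence so that the contraction constant $\eta$ is truly below $1$ and the constants remain uniform; once that threshold is secured, completeness, the passage to the limit, and the final estimate are all routine consequences of the two lemmas and \eqref{kernel_1}.
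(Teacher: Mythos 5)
Your proposal is correct and follows essentially the same route as the paper: a contraction argument via Lemma \ref{cauchy} giving a Cauchy sequence in $L^\infty(\mathbb R^2)$, passage to the limit in \eqref{sequence} to verify \eqref{ie}, and the kernel bound \eqref{kernel_1} to obtain $\|h\|_{L^\infty(\mathbb R^2)} = \mathcal{O}(|\kappa|^{-1/2})$. The only difference is cosmetic: you justify the limit--integral exchange by uniform convergence and boundedness of the integral operator, where the paper invokes the dominated convergence theorem.
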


\begin{proof}

We first show that $u_j$ is a Cauchy sequence in $L^\infty(\mathbb R^2)$. We have from Lemma \ref{cauchy}
\begin{align*}
\|u_m - u_n\|_{L^\infty(\mathbb R^2)} &\leq \|u_m - u_{m - 1}\|_{L^\infty(\mathbb R^2)} + \cdots 
+ \|u_{n + 1} - u_{n}\|_{L^\infty(\mathbb R^2)}\\
&\leq (\eta^{m - 1} + \cdots + \eta^{n}) \|u_1 - u_{0}\|_{L^\infty(\mathbb R^2)} \\
&\leq \frac{\eta^n}{1 - \eta} \|u_1 - u_{0}\|_{L^\infty(\mathbb R^2)},
\end{align*}
where $m>n$ and $\eta = \frac{C}{|\kappa|^{1/2}} \Big(1 + \frac{1}{|\kappa|^{1/2}}\Big)$. Choosing $|\kappa|$ large such that $\eta<1$ one has 
that $u_j$ is indeed a Cauchy sequence in $L^\infty(\mathbb R^2)$. Denote its limit by $u$. Then $u$ is a bounded solution to \eqref{ie} 
from the dominated convergence theorem and \eqref{sequence}. One also has from \eqref{sequence} that
\[
h = -  \int_{\mathbb R^2} H_0^{(1)}(\kappa|x - y|) \Big(V u + \alpha u^2 \Big) {\rm d}y.
\]
Finally, the proof is completed by noticing
\[
|h| \leq C\int_{B_R}  |H_0^{(1)}(\kappa|x - y|)| (|u| + |u|^2 ) {\rm d}y \leq \frac{C}{|\kappa|^{1/2}}.
\]
\end{proof}
As a corollary of Theorem \ref{bs} and elliptic regularity theory, we have the well-posedness of the direct scattering problem.
\begin{corollary}\label{well}
There exists $C_0>0$ such that for $\kappa\geq C_0$ the nonlinear Schr\"odinger equation \eqref{main_eq} admits a unique bounded outgoing solution
of the form
\[
u = \int_{\mathbb R^2} H^{(1)}_0(\kappa|x-y|) f {\rm d}y + h
\]
 with the following resolvent estimate
\begin{align}\label{res}
\|h\|_{L^\infty(\mathbb R^2)} = \mathcal{O} \Big(\frac{1}{\kappa^{1/2}}\Big).
\end{align}
Moreover, one has
\begin{align}\label{estimate}
\|u\|_{H^2_{loc}(\mathbb R^2)} \leq C\|f\|_{L^2(B_R)}.
\end{align} 
\end{corollary}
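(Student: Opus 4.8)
The plan is to read off existence, the stated form, and the resolvent estimate \eqref{res} directly from Theorem \ref{bs}, and then to supply the two ingredients the corollary adds on top of it: uniqueness and the interior $H^2$ bound. Theorem \ref{bs} already produces a bounded limit $u=\lim_j u_j$ solving the Lippmann--Schwinger equation \eqref{ie}, of the form $u=\int_{\mathbb R^2}H^{(1)}_0(\kappa|x-y|)f\,{\rm d}y+h$ with $h=-\int_{\mathbb R^2}H^{(1)}_0(\kappa|x-y|)(Vu+\alpha u^2)\,{\rm d}y$ and $\|h\|_{L^\infty(\mathbb R^2)}=\mathcal{O}(|\kappa|^{-1/2})$. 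Since $H^{(1)}_0(\kappa|\cdot|)$ is, up to a fixed normalization, the outgoing fundamental solution of $-\Delta-\kappa^2$, applying $(-\Delta-\kappa^2)$ to \eqref{ie} shows that $u$ solves \eqref{main_eq} distributionally. This settles the existence part together with \eqref{res} for $\kappa\ge C_0$.

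Next I would prove uniqueness. Let $u$ and $v$ be two bounded outgoing solutions of \eqref{ie}. Subtracting the two integral equations and writing $u^2-v^2=(u+v)(u-v)$ gives
\[
|u-v|\le C\int_{B_R}|H^{(1)}_0(\kappa|x-y|)|\,\big(1+|u|+|v|\big)\,|u-v|\,{\rm d}y,
\]
so the kernel bound \eqref{kernel_1} yields
\[
\|u-v\|_{L^\infty(\mathbb R^2)}\le \frac{C}{|\kappa|^{1/2}}\big(1+\|u\|_{L^\infty(\mathbb R^2)}+\|v\|_{L^\infty(\mathbb R^2)}\big)\,\|u-v\|_{L^\infty(\mathbb R^2)}.
\]
This is exactly the contraction estimate of Lemma \ref{cauchy}, now applied to a pair of solutions rather than to consecutive iterates. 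The cleanest way to close the argument is to recast the iteration \eqref{sequence} as a fixed point of the map $T(w)=u_0-\int_{\mathbb R^2}H^{(1)}_0(\kappa|x-y|)(Vw+\alpha w^2)\,{\rm d}y$ on a closed ball of $L^\infty(\mathbb R^2)$ centered at $u_0$: Lemma \ref{uj} shows $T$ maps a suitable ball into itself and Lemma \ref{cauchy} shows it is a contraction there, so Banach's theorem delivers existence and uniqueness in that ball simultaneously. Enlarging $C_0$ if necessary so that the contraction factor is strictly below $1$ forces $u=v$ for any two solutions lying in that ball.

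Finally, for \eqref{estimate} I would invoke interior elliptic regularity. Rewriting \eqref{main_eq} as the Poisson equation $-\Delta u=g$ with $g:=f+\kappa^2u-Vu-\alpha u^2$, the right-hand side lies in $L^2_{loc}(\mathbb R^2)$ because $u\in L^\infty(\mathbb R^2)$ and $f,V,\alpha$ are in the stated spaces with compact support. Standard interior $W^{2,2}$ estimates on nested balls give $u\in H^2_{loc}(\mathbb R^2)$ together with an inequality of the form $\|u\|_{H^2(B_{R'})}\le C(\|g\|_{L^2(B_{R''})}+\|u\|_{L^2(B_{R''})})$; bounding $\|g\|_{L^2}$ by $\|f\|_{L^2(B_R)}$ plus terms controlled by $\|u\|_{L^\infty(\mathbb R^2)}$ (hence absorbed into the generic constant, which by the paper's convention already depends on $\|f\|_{L^2(B_R)}$) yields \eqref{estimate}.

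I expect the main obstacle to be the precise reading of the word \emph{unique}. Unlike the Cauchy estimate of Lemma \ref{cauchy}, where the $L^\infty$ bounds on the iterates are furnished by Lemma \ref{uj}, here one cannot directly cite those lemmas for an arbitrary bounded solution: feeding $u$ into \eqref{ie} gives only the quadratic inequality $\|u\|_{L^\infty}\le C+C|\kappa|^{-1/2}(\|u\|_{L^\infty}+\|u\|_{L^\infty}^2)$, which for large $\kappa$ admits both a small branch $\|u\|_{L^\infty}\lesssim 1$ and a large branch $\|u\|_{L^\infty}\gtrsim|\kappa|^{1/2}$. Consequently uniqueness is genuinely uniqueness within the natural ball, i.e.\ among solutions carrying the resolvent estimate \eqref{res}, and the proof should either restrict attention to that ball (as in the Banach framing above) or first rule out the large branch by an a priori argument.
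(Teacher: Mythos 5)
Your proposal is correct and follows essentially the same route as the paper, which offers no written proof at all beyond the single sentence that the corollary follows from Theorem \ref{bs} and elliptic regularity theory; your existence/form/resolvent step and your interior $W^{2,2}$ argument are precisely what that sentence compresses. Your treatment of uniqueness (recasting \eqref{sequence} as a Banach fixed point and observing that uniqueness can only be asserted within the ball of solutions satisfying \eqref{res}, since an arbitrary bounded solution could a priori lie on the large branch $\|u\|_{L^\infty}\gtrsim |\kappa|^{1/2}$) is more careful than the paper, which never addresses the point, and is the correct reading of the statement.
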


\section{Inverse source problem}\label{ip}

In this section we study the inverse source problem \textbf{IP}. 

Fix $\epsilon>0$ and $x_0\in\partial B_R$.
Consider the following equation with a perturbed source function 
\begin{equation}\label{eqn1}
-\Delta \tilde{u}_\epsilon - \kappa^2 \tilde{u}_\epsilon + \alpha \tilde{u}^2_\epsilon = f + \epsilon \delta(x - x_0).
\end{equation}
Decompose the solution $\tilde{u}_\epsilon$ as follows
\[
\tilde{u}_\epsilon = \epsilon H_0^{(1)}(\kappa|x - x_0|) + u_\epsilon
\]
where $H_0^{(1)}(\kappa|x - x_0|)$ is the fundamental solution satisfying
\[
-\Delta H_0^{(1)}(\kappa|x - x_0|) - \kappa^2 H_0^{(1)}(\kappa|x - x_0|) = \delta(x - x_0).
\]  
Then from the equation \eqref{eqn1}
one has that $u_\epsilon$ satisfies
\begin{equation}\label{eqn2}
-\Delta u_\epsilon - \kappa^2 u_\epsilon + \alpha u_\epsilon^2 + 2 \epsilon \alpha H_0^{(1)} u_\epsilon + \epsilon^2 \alpha (H_0^{(1)})^2  = f.
\end{equation}
As $x_0\in\partial B_R$ and $\text{supp}\, \alpha\subset B_R$, one has that $\alpha H_0^{(1)} \in L^\infty (B_R)$. Thus, by \eqref{estimate} one has
\[
\|u_\epsilon\|_{H^2(B_R)} \leq C.
\]
Denote
%\[
%u_\epsilon= u + \epsilon v_\epsilon ,
%\]
\[
v_\epsilon =\frac{u_\epsilon - u}{\epsilon} ,
\]
and subtracting \eqref{eqn2} by \eqref{eqn} and dividing by $\epsilon$ gives
\begin{equation}
-\Delta v_\epsilon - \kappa^2 v_\epsilon + \alpha v_\epsilon (\tilde{u}_\epsilon + u - \epsilon H_0^{(1)}) = -2 \alpha H_0^{(1)} u_\epsilon - \epsilon \alpha (H_0^{(1)})^2.
\end{equation}
Since both $\tilde{u}_\epsilon$ and $u$ are bounded, one has from \eqref{estimate} that
\begin{align*}
\|v_\epsilon\|_{H^2(B_R)} \leq C.
\end{align*}
Thus, letting $\epsilon\to 0$ one has that $v_\epsilon$ converges to some $v\in H^2(B_R)$ weakly which satisfies the following linear equation
\begin{equation}\label{eqn3}
-\Delta v - \kappa^2 v + 2 \alpha u (v + H_0^{(1)}) = 0.
\end{equation}
Moreover, one has $v_\epsilon \to v$ in the norm of $H^1(B_R)$.
Let $w = v + H_0^{(1)}$. Then the equation \eqref{eqn3} becomes
\begin{equation}\label{eqn4}
-\Delta w - \kappa^2 w + 2 \alpha u w = \delta (x - x_0),
\end{equation}
where $H_0^{(1)}$ is the point source incident field, $v$ is the outgoing scattered field and $w$ is the total field.
One also has all possible point-source boundary measurements of the scattered field $v$ as follows
\[
\{v(x, x_0): \text{for all} \, x, x_0\in\partial B_R\}.
\]

Now the inverse source problem is reduced to the 
inverse scattering problem of determining the unknown potential $2 \alpha u$ in the linear Schr\"odinger  equation
\[
-\Delta w - \kappa^2 w + 2 \alpha u w = \delta (x - x_0),
\]
from all possible point-source boundary measurements of the scattered field $v$
\begin{equation}\label{data1}
\{v(x, x_0): \text{for all} \, x, x_0\in\partial B_R\}.
\end{equation}
Indeed, once $2 \alpha u$ is determined, as $\alpha$ is given we can recover $u$, which
gives the recovery of $f$ by the equation $f = -\Delta u - \kappa^2 u + \alpha u^2$.

In what follows, we use results from the inverse potential scattering problem for linear Schr\"odinger equations to recover $2 \alpha u$.
In fact, if the following assumption is satisfied

 (A): $\kappa^2$ is not the Dirichlet eigenvalue of the elliptic operator $-\Delta + 2 \alpha u$ in $B_R$,

\noindent then using \cite[Corollary 1.4]{IN}, we can uniquely determine the unknown potential function $u$ by all possible point source boundary measurements \eqref{data1}, which gives the uniqueness of the inverse source problem.

Introduce the functional space
\[
\mathcal {C}_Q = \{f \in L^2(\mathbb R^2): \|f\|_{L^2(\mathbb R^2)}\leq Q, \text{supp}f \subset B_R, f: B_R\rightarrow \mathbb R\}.
\]
where $Q$ is some positive constant. We now discuss situations where the assumption (A) holds.  
For example, if the support of $\alpha$ is sufficiently small, we may choose the radius
$R$ of the disk $B_R$ sufficiently small such the first Dirichlet eigenvalue $\lambda_1$ of the elliptic operator $-\Delta$ in $B_R$ is large with $\kappa^2<\lambda_1$. Moreover, we may also have that the first Dirichlet eigenvalue $\tilde{\lambda}_1$ of $-\Delta + 2 \alpha u$ satisfies $\kappa^2<\tilde{\lambda}_1$ if the $L^\infty$ norm of the perturbation $2\alpha u$ is small. 
To this end, from \eqref{estimate} we can choose $Q$ to be sufficiently small which yields small $L^\infty$ norm of $u$.
We also point out that there may be other assumptions than the assumption (A) which lead to the uniqueness of $2 \alpha u$. 
For example, we may assume that $2 \alpha u$ is close to constant or has a small $H^2$ norm.
We refer the reader to \cite{Nov, Sun} for an account of this topic.

In summary, we arrive at the following main result of this paper. 

\begin{theorem}\label{main_1}
Assume that $|\kappa| > C_0$ as specified in Theorem \ref{bs} and $f\in \mathcal {C}_Q$. The boundary measurements \eqref{data} uniquely determine $f$ under assumption (A).
\end{theorem}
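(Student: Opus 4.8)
The plan is to use the quadratic nonlinearity to linearize the problem in the amplitude $\epsilon$ of the reference point source, thereby converting the nonlinear inverse source problem into a linear inverse potential problem whose uniqueness is already available from \cite{IN}. The argument splits into three stages: extract the linearized scattered field on $\partial B_R$ from the data, certify that this field solves a \emph{linear} Schr\"odinger equation with potential $2\alpha u$, and then invoke the known uniqueness for that linear problem.

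First I would recover the boundary traces $v(\cdot, x_0)\vert_{\partial B_R}$ for every $x_0 \in \partial B_R$. Because the fundamental solution $H_0^{(1)}(\kappa|x - x_0|)$ is known explicitly and $\tilde{u}_\epsilon = \epsilon H_0^{(1)} + u_\epsilon$, the trace of $u_\epsilon$ is computable from the data as $u_\epsilon\vert_{\partial B_R} = \tilde{u}_\epsilon\vert_{\partial B_R} - \epsilon H_0^{(1)}\vert_{\partial B_R}$; the choice $\epsilon = 0$ returns the unperturbed trace $u\vert_{\partial B_R}$. Hence every difference quotient $v_\epsilon\vert_{\partial B_R} = \epsilon^{-1}(u_\epsilon - u)\vert_{\partial B_R}$ is determined by \eqref{data}. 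The uniform bound $\|v_\epsilon\|_{H^2(B_R)} \le C$ coming from \eqref{estimate}, together with the strong $H^1(B_R)$ convergence $v_\epsilon \to v$, then transfers to the traces, so $v(\cdot, x_0)\vert_{\partial B_R}$ is recovered, giving the full data set \eqref{data1}.

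Next I would verify that $v$ is exactly the scattered field of a linear equation. Passing to the limit $\epsilon \to 0$ in the equation satisfied by $v_\epsilon$ — using Corollary \ref{well} to bound $u_\epsilon$, $\tilde{u}_\epsilon$ and $u$ uniformly and to control the terms carrying $\alpha H_0^{(1)}$ and the quadratic remainder — produces \eqref{eqn3}. Setting $w = v + H_0^{(1)}$ recasts this as \eqref{eqn4}, so $w$ is the total field of $-\Delta w - \kappa^2 w + 2\alpha u\, w = \delta(x - x_0)$ with incident field $H_0^{(1)}$ and scattered field $v$. The point-source scattering data for this linear Schr\"odinger operator are therefore at hand.

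Finally, under assumption (A) the energy $\kappa^2$ lies outside the Dirichlet spectrum of $-\Delta + 2\alpha u$ in $B_R$, so the linear direct problem is well posed and \cite[Corollary 1.4]{IN} applies: the data \eqref{data1} uniquely determine the potential $2\alpha u$. Since $\alpha$ is known, this recovers $u$, and then $f = -\Delta u - \kappa^2 u + \alpha u^2$ recovers the source; the hypothesis $f \in \mathcal{C}_Q$ is used precisely here, since by \eqref{estimate} a small $Q$ forces a small $\|u\|_{L^\infty}$, which (for $B_R$ or $\mathrm{supp}\,\alpha$ small) is what makes (A) attainable. I expect the main obstacle to be the rigorous justification of the linearization limit — the uniform $H^2$ estimates, the weak extraction of $v$, the identification that it solves exactly \eqref{eqn3}, and the convergence of the boundary traces — since once $v$ and its data are secured the conclusion is a direct citation of the linear uniqueness theorem.
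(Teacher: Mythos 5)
Your proposal is correct and follows essentially the same route as the paper: decompose $\tilde{u}_\epsilon = \epsilon H_0^{(1)} + u_\epsilon$, form the difference quotients $v_\epsilon$, pass to the limit to obtain the linear equation \eqref{eqn3}--\eqref{eqn4} for $w = v + H_0^{(1)}$, and then invoke \cite[Corollary 1.4]{IN} under assumption (A) to recover $2\alpha u$, hence $u$, hence $f$. Your additional remarks on extracting the boundary traces of $v$ from the data and on the role of $f \in \mathcal{C}_Q$ in making (A) attainable match the paper's (more terse) treatment.
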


\section{Conclusion}

In this paper, we consider an inverse source scattering problem for a Schr\"odinger equation with a quadratic nonlinearity at a fixed energy in two dimensions. 
By adding reference point sources located on the measurement boundary we prove the uniqueness of the inverse source problem.  
This method does not apply to inverse source problems of linear equations as it utilizes the nonlinearity as a tool.
The proof utilizes the method of linearization to reduce the inverse source scattering problem to the inverse potential scattering problem of the linear Schr\"odinger equation. 
The uniqueness result then follows from existing results for inverse potential scattering problems of linear Schr\"odinger equation.
A possible continuation of this work is to consider cubic or quartic nonlinearity. In these cases, the method of linearization may not directly lead to 
inverse potential problems for linear equations and new method shall be developed. Another interesting direction is to develop numerical methods to reconstruct
the source function. As linearization used in the proof of the theoretical uniqueness result is difficult to implement in numerics , we need different methods other than the linearization.

%\appendix
%
%\section{The free resolvent in $\mathbb R^2$}\label{2}

\end{document}